\newtheorem{theorem}[subsection]{Theorem}
\newtheorem{lemma}[subsection]{Lemma}
\newtheorem{corollary}[subsection]{Corollary}
\newtheorem{conjecture}[subsection]{Conjecture}
\theoremstyle{definition} \theoremstyle{remark}
\newcommand{\n}{\nabla}
\newcommand{\de}{\delta}
\newcommand{\ti}{\tilde}
\newcommand{\fr}{\frac}
\newcommand{\g}{\gamma}
\newcommand{\e}{\epsilon}
\newcommand{\p}{\partial}
\newcommand{\s}{\sigma}
\newcommand{\ta}{\tau}
\newcommand{\la}{\lambda}
\begin{document}

\title[Knots in Riemannian manifolds]{Knots in Riemannian  manifolds}
\author{Fuquan Fang}
\thanks{Fuquan Fang was supported by NSF Grant of China \#10671097 and the Capital Normal University}
\address{Institute of Mathematics and interdisciplinary Science, Capital Normal University,
Beijing 100048, P.R. China} \email{fuquan\_fang@yahoo.com}

\author{S\'ergio Mendon\c ca}
\thanks{S\'ergio Mendon\c ca was
supported with a fellowship from CNPq, Brazil}
\address{Departamento de An\'alise, Instituto de Matem\'atica,
Universidade Federal Fluminense, Niter\'oi, RJ, CEP 24020-140,
Brasil} \email{mendonca@mat.uff.br,
sergiomendoncario@yahoo.com.br}

\date{}

\subjclass[2000]{Primary 53C42; Secondary 53C22}

\keywords{fundamental group, positive curvature, extrinsic
curvature, totally geodesic}

\begin{abstract} In this paper we study submanifold with
nonpositive extrinsic curvature in a positively curved manifold. Among other
things we prove that, if $K\subset (S^n, g)$ is a totally geodesic submanifold
in a Riemannian sphere with positive sectional curvature where $n\ge 5$, then $K$ is
homeomorphic to $S^{n-2}$ and the fundamental group of the knot complement $\pi _1(S^n-K)\cong
\Bbb Z$.
\end{abstract}

\maketitle

\section{\bf Introduction}

In \cite{Re} the author constructed nontrivial torus knots in $S^3$
which are totally geodesic with respect to some Riemannian metric
with positive curvature. Inspired by this work, it is interesting to
ask the following

\vskip 2mm

\noindent {\bf Problem 1}: {\it Let $(S^n, g)$ be a Riemannian
sphere with positive sectional curvature and let $i: K \to (S^n, g)$
be a codimension $2$ totally geodesic submanifold. Could $i(K)$ be a
nontrivial knot if $n\ge 2$?}

\vskip 2mm

The problem emerges naturally in the study of transformation group
theory acting on manifolds. The famous Smith conjecture asserts that
if a cyclic group acting on $S^3$ has $1$-dimensional fixed point
set, then the fixed point set must be an unknot. The proof of the
conjecture was finally given in 1979 depended on several major
advances in 3-manifold theory, in particular the work of William
Thurston on hyperbolic structures on 3-manifolds, results by William
Meeks and Shing-Tung Yau on minimal surfaces in 3-manifolds, and
work by Hyman Bass on finitely generated subgroups of $GL(2,\Bbb C)$
(cf. [MB]). The well-known Thurston's conjecture for $3$-orbifold,
proved in [BLP], implies that any such an action is topologically
conjugate to a linear action. Clearly, every fixed point component
of a linear action is a trivial knot and so the latter assertion
implies the Smith conjecture.

However, it is known for a long period that the higher dimensional
($\geq 5$) analog of Smith's conjecture is not true (cf. [Go]). It
is interesting to ask the Riemannian geometric version of the higher
dimensional Smith conjecture:

\noindent {\bf Problem 2}: {\it If $\Bbb Z_p$ acts isometrically on
a positively curved Riemannian sphere $(S^n, g)$ with a codimension
$2$ fixed point set $K$, where $n\ge 5$, then $K$ is a unknot.}

It is easy to see that the above conjecture is equivalent to claim
that the action is topologically conjugate to a linear action.
Therefore, a possibly higher dimensional analog of Thurston's
elliptic orbifold conjecture is the following

{\it If a Lie group $G$ acts isometrically on a positively curved
Riemannian sphere $(S^n, g)$, then the $G$-action is topologically
conjugate to a linear action.}

To attack the above question the first measure is the fundamental
group of the knot complement.  In dimension $3$ it is well-known
that a knot is trivial if and only if its complement has infinite
cyclic fundamental group. The main result in this paper implies that
the higher dimensional Smith conjecture is true at the fundamental
group level, namely, the knot complement $S^n-K$ has infinite cyclic
fundamental group.

The above problems have their analogs in algebraic geometry. It is
an important theme to study the complement of an algebraic variety
in $\Bbb P^n$. The main tool in the studies is the well-known
Zariski connectedness theorem.

 Recently, Wilking \cite{Wi}
obtained a connectedness theorem for totally geodesic submanifolds
in a positively curved closed manifold. In \cite{FMR} the authors
generalized Wilking's Theorem and the Frankel Theorems
(cf.\cite{Fr2}) to a unified connectedness principle (compare
\cite{Lf}, \cite{FL}, \cite{FM}) which applies also to the case of
non totally geodesic submanifolds, e.g., submanifolds with
nonpositive extrinsic curvature.\footnote{An isometric immersion
$f:N\to M$ has nonpositive {\it extrinsic curvature} if the
sectional curvatures $K(X,Y)$ of $N$ do not exceed the corresponding
sectional curvatures $\bar K(X,Y)$ in $M$.} The principle may be
considered as the Riemannian geometric counterparts of the
connectedness principle in algebraic geometry, except the following
Zariski type connectedness conjecture:

\begin{conjecture}
Let $M$ be an $m$-dimensional Riemannian manifold of positive
sectional curvature. If $N$, $H$ are totally geodesic closed
submanifolds of dimensions, respectively, $n$ and $h$, which
intersect transversely, then the homomorphism
$$j_*: \pi _i(N-(N\cap H))\to \pi _i(M-H)$$
induced by inclusion is an isomorphism for $i\le 2n-m$ and an
epimorphism for $i=2n-m+1$.
\end{conjecture}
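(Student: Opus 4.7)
The plan is to reduce the conjecture to a relative connectedness statement for the manifolds with boundary obtained by deleting small tubular neighborhoods of $H$ and $N\cap H$, and then to attempt a Morse-theoretic argument on a constrained path space, in the spirit of Wilking \cite{Wi} and \cite{FMR}.

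Since $N$ meets $H$ transversely and $H$ is totally geodesic, for all sufficiently small $\e>0$ a tubular neighborhood $U_\e(H)\subset M$ restricts to a tubular neighborhood $U_\e(H)\cap N$ of $N\cap H$ in $N$, and the distance function $d(\cdot,H)$ is smooth on $U_\e(H)-H$. Setting $M_\e=M\setminus\inte U_\e(H)$ and $N_\e=N\setminus\inte(U_\e(H)\cap N)$, the flow of $-\n d(\cdot,H)$ produces strong deformation retractions $M-H\simeq M_\e$ and $N-(N\cap H)\simeq N_\e$. It would therefore suffice to prove that the inclusion $N_\e\hookrightarrow M_\e$ is $(2n-m)$-connected, i.e.\ that $\pi_i(M_\e,N_\e)=0$ for $i\le 2n-m+1$.

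To kill these relative homotopy groups I would study the energy functional on the constrained path space $P(M_\e;N_\e,q)$ of paths $\g:[0,1]\to M$ with $\g(0)\in N_\e$, $\g(1)=q\in N_\e$ fixed, and $\g([0,1])\subset M_\e$. The interior critical points are geodesics of $M$ orthogonal to $N$ and disjoint from $H$; for these, the positive curvature of $M$ together with the totally geodesic hypothesis on $N$ yields, via the second variation formula exactly as in Wilking's argument and in the generalized principle of \cite{FMR}, a lower bound of $2n-m+1$ on the Morse index. A standard sweep-out argument would then give the required vanishing of $\pi_i(M_\e,N_\e)$ in the claimed range, so that passing to the limit $\e\to 0$ would complete the proof.

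The main obstacle, and the reason the statement remains only a conjecture, is the behavior at the artificial boundary $\p M_\e=\p U_\e(H)$. Critical paths of the constrained functional that touch or are tangent to $\p U_\e(H)$ correspond, in the limit $\e\to 0$, to broken geodesics meeting $H$; they do not satisfy the free geodesic equation, and the interior Wilking-type index bound gives no control over them. Completing the argument therefore seems to require either a perturbation scheme pushing these boundary critical points off $H$, or a direct second variation estimate that simultaneously exploits the focal theory of $N$ and of $H$ along geodesics meeting $H$ transversely. Producing such an $\e$-independent index estimate, and in particular explaining why the sharp connectivity should be $2n-m$ rather than the weaker $n+h-m$ one would expect by naively applying the connectedness theorem inside $H$, is where genuinely new ideas beyond \cite{Wi} and \cite{FMR} appear to be needed.
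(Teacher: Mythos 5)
You should first note that this statement is labeled as a \emph{conjecture} in the paper and is not proved there; the authors establish it only at the level of fundamental groups, and only under additional hypotheses (Theorem 1.4 via Lemma 3.3). Your proposal is therefore correctly calibrated: you outline the natural Wilking-type attack and then honestly identify why it does not close, which matches the actual status of the statement. The reduction you make --- replacing $M-H$ and $N-(N\cap H)$ by the deformation retracts $M_\e$ and $N_\e$ and asking for $\pi_i(M_\e,N_\e)=0$ for $i\le 2n-m+1$ --- is the right reformulation, and the obstruction you name (critical paths of the constrained functional touching $\p U_\e(H)$, for which the interior second-variation index bound gives no information) is indeed the genuine difficulty.

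It is worth comparing with how the paper circumvents this obstruction in the one case it can, namely $i=1$. Rather than doing Morse theory on a constrained path space, Lemmas 2.1 and 2.2 work with curves in $M-V$ whose endpoints are free on $\p \bar V$: a length-minimizing such curve in its homotopy class is a geodesic meeting $\p \bar V$ perpendicularly, hence extends to a geodesic segment hitting $H$ perpendicularly at both ends; second variation along parallel fields lying in the null space of the second fundamental form of $H$ (all of $TH$ in the totally geodesic case) produces a strictly shorter nearby curve with endpoints on $H$, and truncating that curve at its first and last crossings of $\p \bar V$ only shortens it further while keeping it inside $M-V$. This yields $\pi_1(M-V,\p \bar V)=0$, and the circle-bundle comparison over $N\cap H\subset H$ then gives surjectivity of $j_*$ on $\pi_1$. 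That truncation trick is precisely the ``perturbation pushing boundary critical points off $H$'' you ask for, but it exploits minimality in an essential way: for a minimizer one only needs to exhibit a single shorter competitor, whereas for $\pi_i$ with $i\ge 2$ one needs index bounds at \emph{all} critical points, including those constrained by the artificial boundary, and neither your sketch nor the paper supplies these. So your proposal is a reasonable account of why the statement is open, but it must not be presented as a proof; the provable fragment is the $\pi_1$ assertion of Theorem 1.4, obtained by the free-boundary variational argument just described.
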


We remark that, by the transversality theorem, $N\cap H\subset N$ is a submanifold
of codimension $m-h$, and the inclusion induces an isomorphism $\pi _i(N-N\cap H)\cong \pi _i(N)$ if
$i\le m-h-2$ and an epimorphism $\pi
_i(N-N\cap H)\twoheadrightarrow \pi _i(N)$ if $i=m-h-1$.

As a byproduct of this paper we prove the above conjecture at the
level of fundamental group. By the above remark and Frankel's
theorem \cite{Fr2} it is new only if $h=m-2$.

Now let us state our main results:

\begin{theorem}

Let $(S^{n}, g)$ be a Riemannian sphere with positive sectional
curvature. If $K\subset (S^{n}, g)$ is a closed embedded submanifold
of codimension $2$ with nonpositive extrinsic curvature, then $K$ is
homeomorphic to $ S^{n-2}$ and the fundamental group $\pi
_1(S^{n}-K)\cong  \Bbb Z$, provided one of the following conditions
holds:

(1.2.1) $n\ge 14$;

(1.2.2) $n\ge 5$ and $K$ is totally geodesic.
\end{theorem}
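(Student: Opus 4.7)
The argument has three natural stages: (i) high connectivity of the inclusion $K\hookrightarrow S^n$, (ii) identification of $K$ as a topological $(n-2)$-sphere, and (iii) computation of $\pi_1(S^n-K)$.

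For (i), I would apply the connectedness principle of \cite{FMR}---or Wilking's theorem in the totally geodesic case (1.2.2)---to $K^{n-2}\subset(S^n,g)$. Because $K$ has codimension $2$ in a positively curved sphere, this yields $(n-3)$-connectedness of the inclusion in the totally geodesic case, and a slightly weaker bound under merely nonpositive extrinsic curvature that is still strong enough when $n\geq 14$. The long exact sequence of the pair, together with $\pi_i(S^n)=0$ for $i<n$, then forces $K$ to be $(n-4)$-connected.

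For (ii), Hurewicz gives $H_i(K)=0$ for $1\leq i\leq n-4$. Since $K$ is simply connected it is orientable, so Poincar\'e duality $H^j(K)\cong H_{n-2-j}(K)$ combined with universal coefficients forces $H_\ast(K)\cong H_\ast(S^{n-2})$. A further Hurewicz plus Whitehead step produces a homotopy equivalence $K\simeq S^{n-2}$, and the topological Poincar\'e conjecture (Smale for $n-2\geq 5$, Freedman for $n-2=4$, Perelman for $n-2=3$) upgrades this to a homeomorphism. In (1.2.2) with $n=5$, the totally geodesic hypothesis passes positive curvature to $K$, so Hamilton's theorem on positively curved $3$-manifolds may be used directly.

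For (iii), Alexander duality gives $H_1(S^n-K)\cong H^{n-2}(K)\cong\mathbb{Z}$, so $\pi_1(S^n-K)$ has abelianization $\mathbb{Z}$ and it suffices to show the group is cyclic. I would do this by establishing the $\pi_1$-case of the Zariski-type conjecture stated in the excerpt. Specifically, one seeks a $2$-dimensional embedded submanifold $N\subset S^n$ of nonpositive extrinsic curvature meeting $K$ transversely in a single point $p$---for instance a minimal disk spanning a meridian, for which the Gauss equation automatically yields the required curvature bound. The pair version of the connectedness principle then asserts that $N\setminus\{p\}\hookrightarrow S^n\setminus K$ is a $\pi_1$-isomorphism, and since $\pi_1(N\setminus\{p\})\cong\mathbb{Z}$, we are done.

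The hard part is precisely this last step. The single-submanifold form of \cite{FMR} says nothing directly about $\pi_1$ of the complement; what is needed is a relative statement for the pair $(N,K)\subset S^n$, requiring a careful Morse-theoretic analysis of the joint distance function to $K$ and $N$, with positivity of the ambient curvature used to control focal points and cut loci so that loops in $S^n\setminus K$ can be pushed onto $N\setminus\{p\}$. The dimensional thresholds $n\geq 14$ in (1.2.1) versus $n\geq 5$ in (1.2.2) reflect the quality of the connectivity estimate available in step (i).
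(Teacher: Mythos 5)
Your stages (i) and (ii) match the paper: Florit's bound $\nu_K\ge n-4$ fed into the FMR connectedness theorem gives $(n-7)$-connectedness of $K\hookrightarrow S^n$, Poincar\'e duality then closes the gap exactly when $n\ge 14$, and Wilking's theorem replaces this in the totally geodesic case for $n\ge 5$; the conclusion that $K$ is a homotopy sphere and hence homeomorphic to $S^{n-2}$ is as in the paper. Your observation that $H_1(S^n-K)\cong\mathbb{Z}$ by duality, so that it suffices to prove $\pi_1(S^n-K)$ is cyclic, is also exactly the reduction the paper makes.

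The genuine gap is in your stage (iii). You propose to prove cyclicity by finding a $2$-dimensional surface $N$ (a minimal disk spanning a meridian) meeting $K$ transversely in one point and invoking a ``pair version of the connectedness principle'' to get $\pi_1(N\setminus\{p\})\to\pi_1(S^n\setminus K)$ an isomorphism. You acknowledge this is the hard part, but the difficulty is worse than a missing technical argument: the connectedness machinery available (Theorem 1.4 / Lemma 3.3 of the paper, or the FMR theorems it rests on) requires $\nu_N+\nu_H\ge m+k$, which is impossible when $\dim N=2$ and $m\ge 5$ since $\nu_N\le 2$ and $\nu_H\le m-2$. So the surjectivity you need cannot come from a transverse low-dimensional slice. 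The paper's actual route avoids this entirely: Lemma 2.2 shows, by a second-variation/shortening argument for geodesics meeting $\partial\bar V$ orthogonally (Lemma 2.1), that $\pi_1(M-V,\partial\bar V)=0$ whenever $\nu_H\ge\frac{m+k-1}{2}$, a condition that \emph{is} satisfied by $K$ itself (with $\nu_K\ge n-4$ and $k=1$ one needs $n\ge 8$, and $n\ge 4$ in the totally geodesic case). Hence $\pi_1(\partial\bar V)\to\pi_1(S^n-K)$ is onto, and since $\partial\bar V$ is a circle bundle over the simply connected $K$, $\pi_1(\partial\bar V)\cong\mathbb{Z}$, so $\pi_1(S^n-K)$ is cyclic and therefore $\cong\mathbb{Z}$. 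To repair your proof you should replace the transverse-surface idea with this tubular-neighborhood argument (or supply an independent proof of the loop-pushing statement, which is precisely the content of Lemmas 2.1 and 2.2).
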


\vskip2mm

It is possible to classify a higher dimensional knot $K\subset S^n$
such that $\pi _1(S^n-K)\cong \Bbb Z$ in certain range, e.g., a
theorem of Levine \cite{Le} asserts that $3$-knots in $S^5$ are
determined up to isotopy by the $S$-equivalence classes of their
 Seifert matrices. Following
Levine, we call such a knot a {\it simple knot}.

\vskip 2mm

\begin{corollary}

Let $K\subset (S^5, g)$ be a totally geodesic $3$-manifold in a
Riemannian $5$-sphere with positive sectional curvature. Then $K$ is
a simple knot.
\end{corollary}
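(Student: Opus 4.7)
The plan is to show that this corollary follows almost immediately from Theorem 1.2 applied with $n=5$, combined with the definition of Levine's simple knot. First I would invoke Theorem 1.2(1.2.2) with $n=5$: since $K$ is totally geodesic (in particular of nonpositive extrinsic curvature) and has codimension $2$ in $(S^5,g)$, the theorem yields that $K$ is homeomorphic to $S^3$ and that the fundamental group of the complement satisfies $\pi_1(S^5-K)\cong \mathbb{Z}$.

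Next I would recall the notion of a simple knot in the sense of Levine. For a $(2k-1)$-knot in $S^{2k+1}$, ``simple'' means that the complement has the homotopy $(k-1)$-type of $S^1$, i.e. $\pi_i(S^{2k+1}-K)\cong \pi_i(S^1)$ for $i<k$. In the case of a $3$-knot in $S^5$ we have $k=2$, so the only nontrivial condition is $\pi_1(S^5-K)\cong \mathbb{Z}$, which is exactly the condition provided by Theorem 1.2. Together with the fact that $K$ is homeomorphic to $S^3$, this places $K$ within the class of knots to which Levine's $S$-equivalence classification of Seifert matrices applies.

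There is essentially no main obstacle, as the corollary is a direct translation of the conclusion of Theorem 1.2 into Levine's terminology; the real work has already been done in proving Theorem 1.2. The only point one might want to be careful about is confirming that the topological $S^3$ produced by Theorem 1.2, together with the embedding $K\hookrightarrow S^5$, yields a knot in the sense required by Levine's theorem (an embedded topological $S^3$ with infinite cyclic complement fundamental group), and this is immediate from the statement.
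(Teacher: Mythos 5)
Your proposal is correct and is essentially the paper's own (implicit) argument: the corollary is a direct application of Theorem 1.2(1.2.2) with $n=5$, giving $K\cong S^3$ and $\pi_1(S^5-K)\cong\mathbb{Z}$, which is exactly the paper's definition of a simple knot (and, as you note, coincides with Levine's condition for $3$-knots in $S^5$).
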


\vskip2mm

The following theorem particularly verifies  the first nontrivial case of
Conjecture 1.1:

\begin{theorem}
Let $M$ be an $m$-dimensional closed Riemannian manifold of positive
sectional curvature. Let $H\subset M$ be a codimension $2$
submanifold of non-positive extrinsic curvature, and let $N\subset
M$ be a  submanifold of dimension $n$ which in general position with
$H$ (i.e., intersects $H$ transversely) with non-positive extrinsic
curvature, then the homomorphism
$$j_*:\pi_1(N-(N\cap H))\to \pi_1(M-H)$$
induced by the inclusion is surjective, provided  one of the
following conditions holds:

(1.4.1) $n\ge \fr{3m}4$ and $m\ge 9$;

(1.4.2) $n\ge \frac 12 m$ and $m\ge 5$, provided $N, H$ are both totally geodesic.
\end{theorem}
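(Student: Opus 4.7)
My plan is to combine the connectedness principle of \cite{Wi, FMR} with a transversality argument tailored to the codimension-two complement. First, I would apply the connectedness principle simultaneously to $N \subset M$, $H \subset M$, and to the transverse intersection $N \cap H$, which has dimension $n-2$ and inherits non-positive extrinsic curvature from $N$ and $H$. Under (1.4.1) the inequalities $2n - m \geq m/2 \geq 4$ give abundant connectedness, while under (1.4.2) the totally geodesic Wilking bound $2n - m + 1 \geq 1$ is exactly sharp. In either case, the outcomes I would need are: $\pi_1(M, N) = 0$, so that $\pi_1(N) \to \pi_1(M)$ is surjective; $H$ is connected; $N \cap H$ is nonempty (by a Frankel-type intersection theorem, since $n + (m-2) \geq m$); and $(H, N \cap H)$ is $0$-connected as a pair.

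Second, I would record, via tubular-neighborhood Van Kampen, the standard exact sequences
\begin{equation*}
\langle \mu' \rangle^{\mathrm{nc}} \to \pi_1(N - N \cap H) \to \pi_1(N) \to 1, \qquad \langle \mu \rangle^{\mathrm{nc}} \to \pi_1(M - H) \to \pi_1(M) \to 1,
\end{equation*}
where $\mu, \mu'$ are meridians and $\langle - \rangle^{\mathrm{nc}}$ denotes normal closure. Transversality identifies $j_*(\mu')$ with $\mu$ at a common basepoint on $N \cap H$, pushed off to $p_0 \in N - N \cap H$ along a path in $N - N \cap H$. Combined with the surjectivity of $\pi_1(N) \to \pi_1(M)$ from the first step, a diagram chase reduces the theorem to showing that $\langle \mu \rangle^{\mathrm{nc}}_{\pi_1(M - H)} \subset j_*(\pi_1(N - N \cap H))$; equivalently, that every conjugate $g \mu g^{-1}$ for $g \in \pi_1(M - H)$ lies in the image of $j_*$.

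This last reduction is the main obstacle, and I would attack it geometrically. Represent $g \mu g^{-1}$ by a loop in $M - H$; using $\pi_1(M, N) = 0$, homotope it in $M$ to a loop in $N$, then generically into $N - N \cap H$ via codimension two in $N$. The resulting homotopy disk, made transverse to $H$, meets $H$ in finitely many interior points. The $0$-connectedness of $(H, N \cap H)$ lets one slide each intersection point along $H$ to a point of $N \cap H$; and since $T_x N + T_x H = T_x M$ at such a point, a small neighborhood of each crossing in the disk can be locally replaced by a disk lying inside $N$, so that the local meridian contribution becomes a meridian of $N \cap H$ in $N$, which lifts to $\pi_1(N - N \cap H)$. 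The delicate part is that the ``tail paths'' of the disk connecting the basepoint to each crossing still run through $M - H$ rather than $N - N \cap H$; under (1.4.1) the generous connectedness $\pi_2(M, N) = 0$ lets the tails be deformed into $N$ outright, whereas under (1.4.2) one must iterate the same transversality trick, or invoke the $\pi_0$-surjectivity already in hand, to straighten the tails into $N - N \cap H$ without gaining new uncontrolled crossings. This iterative tail-control at the boundary of the connectedness hypothesis is precisely what the sharp dimension bounds $n \geq m/2$ and $m \geq 5$ (respectively $n \geq 3m/4$ and $m \geq 9$) are designed to accommodate.
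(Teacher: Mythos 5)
Your reduction is sound as far as it goes: using $\pi_1(M,N)=0$ together with the surjection $\pi_1(N-N\cap H)\twoheadrightarrow\pi_1(N)$ and the Van Kampen description of $\ker(\pi_1(M-H)\to\pi_1(M))$ as the normal closure of the meridian, the theorem does come down to showing that every conjugate $g\mu g^{-1}$, $g\in\pi_1(M-H)$, lies in $j_*(\pi_1(N-N\cap H))$. But that is exactly where the whole difficulty sits, and your proposed resolution is circular. All of your tools ($\pi_1(M,N)=0$, the disk bounded by $g\mu g^{-1}$ in $M$, sliding crossings along $H$ into $N\cap H$) produce homotopies in $M$, not in $M-H$. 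Each time a tail path or a piece of the disk is pushed across $H$, the element it represents in $\pi_1(M-H)$ changes by a conjugate of a meridian with an uncontrolled conjugator --- precisely the class of elements you are trying to capture. ``Iterating the transversality trick'' has no decreasing complexity measure: the number of crossings and the complexity of the conjugators can grow at each step. Separately, the connectedness you invoke under (1.4.1) is not available: Florit's bound gives only $\nu_N\ge 2n-m$, so the inclusion $N\to M$ is $(4n-3m+1)$-connected, which for $n\ge 3m/4$ yields exactly $\pi_1(M,N)=0$ and nothing about $\pi_2(M,N)$. (Also, $N\cap H$ does not obviously inherit nonpositive extrinsic curvature, though this is not needed.)

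The paper closes the gap by an entirely different mechanism that works inside the complement. Lemma 2.1 is a second-variation argument: any geodesic in $M-\bar V$ meeting $\partial\bar V$ orthogonally at both ends extends to a geodesic with endpoints on $H$ and normal to $H$, and the hypothesis $\nu_H\ge\frac{m+k-1}{2}$ (supplied here by Florit's $\nu_H\ge m-4$, or by total geodesy) guarantees enough parallel fields along which the second fundamental form vanishes at both ends to produce a length-decreasing variation staying in $M-\bar V$. Lemma 2.2 then gives $\pi_1(M-V,\partial\bar V)=0$, i.e.\ every loop in the complement can be homotoped \emph{within the complement} onto the boundary of the tube --- this is the step for which your proposal has no substitute, and it is where positive curvature acts on $M-H$ rather than on $M$. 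After that, surjectivity of $\pi_1(N\cap\partial\bar V)\to\pi_1(\partial\bar V)$ follows by comparing the circle bundles $S^1\to\partial\bar V\cap N\to N\cap H$ and $S^1\to\partial\bar V\to H$ using $\pi_1(H,N\cap H)=0$ (Theorem 3.1), and the result follows since the inclusion $N\cap\partial\bar V\to M-V$ factors through $N-(N\cap H)$. You would need to prove something like Lemma 2.2 to make your outline into a proof.
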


For a Riemannian manifold $M$ and a given integer $1\le k\le
m-1$, we say that $M$ has positive $k$-Ricci curvature if
$$\sum_{i=1}^kK(v,e_i)>0,$$
where $v\in T_pM$ is any unit tangent vector, $K(v,e_i)$ is the
sectional curvature associated to the plane generated by $v$ and
$e_i$, and $v, e_1,\cdots,e_k$ are orthonormal vectors. This
definition is due to Rovenski (\cite{Ro}). A slightly different definition of
positivity of $k$-Ricci curvature was given previously by Wu (\cite{Wu}).
Observe that $M$ has positive sectional curvature when $k=1$ and
positive Ricci curvature when $k=m-1$.

\vskip 2mm

\noindent {\bf Remark 1.5}: The same conclusion in the above Theorem 1.4 holds
true if we replace the inequalities by either of the following conditions:

(1.5.1) $n\ge   \fr{3m+k-1}4$ and  $m\ge k+8$;

(1.5.2) $n\ge \fr{m+k-1}2$ and $m\ge k+4$, provided both $N, H$ are totally geodesic.

\noindent {\bf Remark 1.6}: The proof to Theorem 1.2 implies easily
the following consequence: Let $M$ be a closed $m$-dimensional
Riemannian manifold and $H$ a simply connected codimension $2$
submanifold with  nonpositive extrinsic curvature. Then $\pi_1(M-H)$
is cyclic if one of the following conditions holds:

(1.6.1) $M$ has positive sectional curvature, $m\ge 8$;

(1.6.2) $M$ has positive $k$-Ricci curvature, $m\ge  k+7$.

\section{\bf Two key lemmas}

Let $M$ be a closed $m$-dimensional Riemannian manifold, and let
$H\subset M$ be a closed submanifold. Following \cite{FMR}, the
asymptotic index of $H\subset M$ is defined by $\nu _H
=\text{min}_{x\in H} \nu (x)$, where $\nu(x)$ is the maximal
dimension of a subspace of $T_xH$ on which the second fundamental
form vanishes (cf. p 188 of [Fl]). Clearly, $H$ is totally geodesic
if and only if $\nu _H=\dim(H)$.

In this section we will prove two key lemmas using variation
theory in the presence of positive curvature.

\begin{lemma} Let $M$ be a closed $m$-dimensional Riemannian manifold
of positive $k$-Ricci curvature and let $H$ be an embedded
submanifold of $M$. Let $V$ be the $\e$-tubular open neighborhood of
$H$ for some small $\e>0$, with closure $\bar
V$ and boundary $\p \bar V$. Let $\gamma:[0,1]\to M$ be a geodesic
such that

(2.1.1) $\g(t)\in M-\bar V$
for $t\in (0,1)$ and $\g(0),\g(1)\in \p \bar V$;

(2.1.2) $\g'(0),\g'(1)\perp\p \bar V$.

If $\nu_H\ge \fr{m+k-1}2$, then there
exists a smooth variation $\g_s$ of $\g=\g_0$ with
$\g_s(0),\g_s(1)\in \p \bar V$ and $\g_s(t)\in M-\bar V$ for all
$t\in(0,1)$ and all $s$, such that the length $L(\g_s)<L(\g)$ if
$s\not=0$.
\end{lemma}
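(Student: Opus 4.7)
My strategy is to first extend $\g$ to a longer geodesic $\ti\g$ meeting $H$ orthogonally at both ends, produce a length-decreasing variation of $\ti\g$ with endpoints moving freely on $H$, and finally trim it back to a variation of $\g$. Condition (2.1.2) together with the fact that $\p\bar V$ is an $\e$-tube around $H$ forces the prolongations of $\g$ backward from $\g(0)$ and forward from $\g(1)$ along their own geodesic to reach $H$ at time $\e$; this yields $\ti\g:[-\e,1+\e]\to M$ with $\ti\g(-\e)=p_0\in H$, $\ti\g(1+\e)=p_1\in H$, and $\ti\g'$ perpendicular to $H$ at both endpoints.

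Next I would exploit the asymptotic hypothesis. Let $W_0\subset T_{p_0}H$ and $W_1\subset T_{p_1}H$ be subspaces of dimension $\nu_H\ge (m+k-1)/2$ on which the second fundamental form $\al_H$ vanishes, and let $P:\ti\g'(-\e)^\perp\to\ti\g'(1+\e)^\perp$ denote parallel transport along $\ti\g$. Since $\ti\g$ is normal to $H$ at both ends, both $W_0$ and $P^{-1}(W_1)$ lie inside the $(m-1)$-dimensional space $\ti\g'(-\e)^\perp$, so
\[
\dim\bigl(W_0\cap P^{-1}(W_1)\bigr)\ge 2\nu_H-(m-1)\ge k.
\]
Choose orthonormal vectors $v_1,\dots,v_k$ in this intersection and let $V_i$ be the parallel extension of $v_i$ along $\ti\g$; then $V_i\perp\ti\g'$ everywhere, and both $V_i(-\e)$ and $V_i(1+\e)$ lie in asymptotic subspaces of $H$. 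Each $V_i$ is realized as the infinitesimal variation field of some smooth variation $\ti\g_s^{(i)}$ of $\ti\g$ through curves with endpoints on $H$. The free-boundary second-variation formula reads
\[
I(V_i,V_i)=\int_{-\e}^{1+\e}\bigl(\|\n_tV_i\|^2-K(V_i,\ti\g')\bigr)dt+\bigl\langle\al_H(V_i,V_i),\ti\g'\bigr\rangle\Big|_{-\e}^{1+\e}.
\]
Parallelism kills $\n_tV_i$ and the asymptotic condition kills the boundary term, so summing and invoking positive $k$-Ricci on the orthonormal frame $\{\ti\g'/|\ti\g'|,V_1,\dots,V_k\}$ yields
\[
\sum_{i=1}^k I(V_i,V_i)=-\int_{-\e}^{1+\e}\sum_{i=1}^k K(V_i,\ti\g')\,dt<0.
\]
Hence $I(V_{i_0},V_{i_0})<0$ for some $i_0$, producing a variation $\ti\g_s$ of $\ti\g$ with endpoints on $H$ and $L(\ti\g_s)<L(\ti\g)$ for small $s\neq 0$.

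Finally I trim $\ti\g_s$ down to a variation of $\g$. Since $\ti\g$ crosses $\p\bar V$ transversely at $t=0,1$, the implicit function theorem gives smooth functions $\al(s),\beta(s)$ near $0,1$ with $\al(0)=0$, $\beta(0)=1$, and $\ti\g_s(\al(s)),\ti\g_s(\beta(s))\in\p\bar V$; reparametrizing $\ti\g_s|_{[\al(s),\beta(s)]}$ onto $[0,1]$ defines $\g_s$, and continuity guarantees $\g_s(t)\in M-\bar V$ for $t\in(0,1)$ once $s$ is small. The two trimmed-off arcs are curves from $H$ to $\p\bar V$ and thus each has length at least $\e$, giving
\[
L(\g_s)\le L(\ti\g_s)-2\e<L(\ti\g)-2\e=L(\g),
\]
as required. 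The main obstacle I expect is the dimension count coupling asymptotic subspaces at the two distinct footpoints $p_0,p_1$ through parallel transport; once this is in hand, everything else is a variant of the familiar Synge/Frankel index-form technique.
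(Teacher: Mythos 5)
Your proposal is correct and follows essentially the same route as the paper's own proof: extend $\g$ to a geodesic meeting $H$ orthogonally, use the dimension count $2\nu_H-(m-1)\ge k$ to obtain $k$ parallel fields with endpoints in asymptotic subspaces, apply the second variation formula with positive $k$-Ricci curvature, and trim back using transversality and the fact that the discarded arcs each have length at least $\e$. No substantive differences.
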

\begin{proof} For $\e$ sufficiently small $\p \bar V$ is contained in the
$2\e$-tubular neighborhood $U$ of $H$. We recall that, for all $x\in
U$, the gradient of the distance function $\rho$ from $H$ satisfies
$\n \rho(x)=\s'(\rho(x))$, where $\s:[0,\rho(x)]\to U$ is a minimal
geodesic from $H$ to $x$ with $\s'(0)\perp H$ and
$|\s'(\rho(x))|=1$. Since $\g'(0),\g'(1)\perp\p \bar V$ and $\p \bar
V=\rho^{-1}(\{\e\})$, we have that $\g'(0)=\la_1\n\rho(\g(0))$ and
$\g'(1)=\la_2\n\rho(\g(1))$. Thus the geodesic $\g$ can be extended
to a geodesic $\ti \g:[-\e,1+\e]\to M$ with $p=\ti\g(-\e)\in
H,q=\ti\g(1+\e)\in H$ and $\ti\g'(-\e),\ti\g'(1+\e)\perp H$.

The parallel transport along $\ti \g$ defines an isometric linear
injective map  $P:T_pH\to T_qM$. Consider $\nu_H$-dimensional linear
subspaces $W\subset T_pH$ and $\widetilde W\subset T_qH$ such that
the second fundamental form $\alpha$ vanishes on $W$ and $\widetilde
W$. Since $\nu_H\ge \fr{m+k-1}2$ and $(P(W)+\widetilde W)\subset
\{\ti \g'(1+\e)\}^\perp$ we have
$$\dim(P(W)\cap \widetilde W)\!=\!\dim(P(W))+\dim(\widetilde W)-\dim(P(W)+\widetilde W)$$
$$\ge m+k-1-\dim(P(W)+\widetilde W)\ge m+k-1-(m-1)=k,$$
hence there exist orthonormal parallel vector fields
$e_1(t),\cdots,e_k(t)$ along $\tilde\g$ with $e_i(-\e)\in W$ and
$e_i(1+\e)\in \widetilde W$ for $i=1,\cdots,k$.

For each $i$ consider a variation $\g_s^i$ of $\tilde\g$ with
$\g_s^i(-\e),\g_s^i(1+\e)\in H$ and  $\frac{\partial
\g^i_s(t)}{\partial s}=e_i(t)$ and let $L_i(s)$ be the length of the
curve $\g^i_s$. The positivity of the $k$-Ricci curvature, the fact
that $\alpha$ vanishes on $W$ and $\widetilde W$, and the second
variation formula imply together that there exists $i$ such that
$L_i''(0)<0$. Thus we get a variation $\g_s^i$, denoted by $\ti\g_s$,
so that
$$\ti\g_0=\ti\g, \ti\g_s(-\e),\ti\g_s(1+\e)\in H\text{ and
} L(\ti\g_s)<L(\ti\g)\text{ if }s\not=0$$ Obviously, there exist
$-\e<t_s<u_s<1+\e$ such that $\ti\g_s(t_s), \ti\g_s(u_s)\in \p \bar
V$ and the image of the restriction $\ti\g_s|_{(t_s,u_s)}$ is
contained in $M-\bar V$. By the transversality of the intersection
between $\g$ and $\p \bar V$ we obtain that $t_s, u_s$ depend
smoothly on $s$. For $s\not=0$ we have:
$$
2\e+L(\g)=L(\ti\g)>
L(\ti\g_s)=L(\ti\g_s|_{[-\e,t_s]})+L(\ti\g_s|_{[t_s,u_s]})+L(\ti\g_s|_{[u_s,1+\e]})$$
$$\ge 2\e+L(\ti\g_s|_{[t_s,u_s]}).$$
Thus we have $L(\g)>L(\ti\g_s|_{[t_s,u_s]})$. We define
$\g_s=\ti\g_s|_{[t_s,u_s]}\circ \s_s$ for some smooth change of
parameters $\s_s:[0,1]\to [t_s,u_s]$. The desired result follows.
\end{proof}

\begin{lemma} Let $H\subset M$ and $V$ be as in Lemma 2.1. If $\nu_H\ge \fr{m+k-1}2$, then
$\pi_1(M-V,\p \bar V)=0$.
\end{lemma}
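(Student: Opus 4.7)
The plan is to deduce the triviality of $\pi_1(M-V,\partial \bar V)$ from Lemma~2.1 by a length-minimization argument. Fix $x_0\in \partial \bar V$ and an element of $\pi_1(M-V,\partial \bar V,x_0)$ represented by a path $\gamma:[0,1]\to M-V$ with $\gamma(0)=x_0$ and $\gamma(1)\in \partial \bar V$. The goal is to produce a homotopy, through paths in $M-V$ fixing $x_0$ and keeping the other endpoint in $\partial \bar V$, from $\gamma$ to a path entirely contained in $\partial \bar V$.

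First I would minimize length in the relative homotopy class of $\gamma$. By a standard Ascoli--Arzel\`a argument using compactness of $M$, a length-minimizing representative $\gamma_0$ exists; parametrized by arc length, it is Lipschitz and piecewise smooth. If $\gamma_0\subset \partial \bar V$, we are done. Otherwise the open set $\gamma_0^{-1}(M-\bar V)\subset [0,1]$ is nonempty, and since $\gamma_0(0),\gamma_0(1)\in \partial \bar V$ one of its connected components has the form $(t_0,t_1)\subset (0,1)$ with $\gamma_0(t_0),\gamma_0(t_1)\in \partial \bar V$. Minimality then forces $\gamma_0|_{[t_0,t_1]}$ to be a geodesic of $M$, since small variations supported in $(t_0,t_1)$ stay in the open set $M-\bar V\subset M-V$.

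Next I would invoke the first variation of length to establish perpendicularity: $\gamma_0'(t_0)$ and $\gamma_0'(t_1)$ are both normal to $\partial \bar V$. The relevant variations perturb the interface times $t_0,t_1$ and slide $\gamma_0(t_0),\gamma_0(t_1)$ along $\partial \bar V$, with the remainder of $\gamma_0$ reconnected by arbitrarily short arcs inside $\partial \bar V$. Such a perturbation lies in the same relative homotopy class, and since $\gamma_0$ is a minimizer the first variation must vanish for every tangential endpoint motion, which yields the claimed perpendicularity at both interface points.

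Finally, $\gamma_0|_{[t_0,t_1]}$ now satisfies every hypothesis of Lemma~2.1. That lemma provides a variation $\sigma_s$ of this subarc with $\sigma_s(t_0),\sigma_s(t_1)\in \partial \bar V$, interior in $M-\bar V$, and $L(\sigma_s)<L(\gamma_0|_{[t_0,t_1]})$ for $s\neq 0$. Substituting $\sigma_s$ for the subarc in $\gamma_0$, and bridging the slightly displaced new endpoints back to $\gamma_0(t_0),\gamma_0(t_1)$ by short arcs in $\partial \bar V$ of length $o(1)$ in $s$, produces a strictly shorter representative in the same relative homotopy class, contradicting the minimality of $\gamma_0$. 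Hence $\gamma_0\subset \partial \bar V$, which witnesses $[\gamma]=0$ and proves the lemma. The main obstacle is the perpendicularity step: the infinitesimal endpoint motion along $\partial \bar V$ must be realized as a bona fide variation of the entire path $\gamma_0$ within the given relative homotopy class, which requires elementary but careful bookkeeping near the interface; once this is settled, Lemma~2.1 does all the remaining work.
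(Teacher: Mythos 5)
Your overall strategy --- minimize length in a relative homotopy class and use Lemma~2.1 to contradict minimality --- is exactly the paper's. But two of your choices create genuine gaps. First, by fixing the basepoint $x_0$ you lose perpendicularity at that end: if the component of $\gamma_0^{-1}(M-\bar V)$ you select is $(0,t_1)$, the point $\gamma_0(0)=x_0$ is not allowed to slide along $\partial\bar V$, the first variation gives no information there, and hypothesis (2.1.2) of Lemma~2.1 fails at that endpoint. The paper instead minimizes over curves $(I,\partial I)\to(M-V,\partial\bar V)$ with \emph{both} endpoints free on $\partial\bar V$; this free version is what is actually used later (surjectivity of $\pi_1(\partial\bar V)\to\pi_1(M-V)$) and it implies the based statement by the usual homotopy-extension argument.

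Second, and more seriously, the ``bridge by short arcs'' device is quantitatively wrong. The variation furnished by Lemma~2.1 has vanishing first variation (the field is tangent to $H$ at the extended endpoints, where $\tilde\gamma'\perp H$) and negative second variation, so the length gain is only of order $s^2$, whereas the endpoints $\sigma_s(t_0),\sigma_s(t_1)$ are displaced from $\gamma_0(t_0),\gamma_0(t_1)$ by order $s$; the bridging arcs therefore cost order $s$ and the substituted curve is \emph{longer} for small $s$, so no contradiction is reached. The same order-$s$ bridge cost vitiates your perpendicularity argument at an interior interface time: minimality only yields $\langle\gamma_0'(t_0),X\rangle\le|X|$ for tangential $X$, which holds automatically since $|\gamma_0'|=1$. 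The repair, implicit in the paper, is to note that a minimizer in the free formulation meets $\partial\bar V$ only at its two endpoints (an interior contact point would split it into two sub-paths, at least one still essential and strictly shorter than the infimum), so the whole minimizer is a single arc whose endpoints move freely: perpendicularity follows from the genuine first variation with no bridges, and the variation of Lemma~2.1 is itself an admissible, strictly shorter competitor.
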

\begin{proof} It suffices to prove that, for any curve $\g : (I, \partial I)\to (M-V, \partial \bar V)$,
there is a homotopy $\ta_s: (I, \partial I)\to (M-V,
\partial \bar V)$ for all $s\in [0,1]$ with $\ta_0=\g$ and $\ta_1(I)\subset \p \bar V$, where $I=[0,1]$.

We argue by contradiction. Assume there is a nonempty set $S$ consisting of the
continuous curves $\g$ as above which can not homotopy to curves in $\p \bar V$.
Let $a$ denote the infimum of the length $L(\g)$ for $\g\in S$.

Let $U$ be a $\de$-tubular neighborhood of $\p \bar V$ so that $U$
strongly deformation retracts to $\partial \bar V$ for some small
$\de>0$. Note that $a\ge 2\de>0$. Consider a sequence of curves
$\s_\ell\in S$ satisfying $L(\s_\ell)\to a$. We may assume further
that $\s_\ell$ is parameterized proportionally to the arc length.
Since $L(\s_\ell)$ is uniformly bounded, the curves $\s_\ell$ form
an equicontinuous sequence. By the Ascoli-Arzela Theorem, passing to
a subsequence, if necessary, we may assume that $\s_\ell$ converges
uniformly to a continuous curve $\s:[0,1]\to M-V$ with length
$L(\s)\le a$.

We claim that $\s\in S$. In fact,  we may easily find normal open
$r_i$-balls $B_1,\cdots,B_{j}$ with $r_i<\de$ for all $i$ and a
partition $t_0=0<t_1<\cdots<t_j=1$ such that
$\s([t_{i-1},t_i])\subset B_i$, for all $i$.  For sufficiently large
$\ell$, there exists a homotopy $\s^u$ between $\s_\ell$ and $\s$
with $\s^u([0,1])\subset \Omega=(M-V)\cup U$. Since $U$ strongly
deformation retracts to $\partial \bar  V$, hence $\s$ is homotopic
to $\s_\ell$, and so $\s \in S$. As a consequence, $L(\s)=a$.

 By the minimality of $L(\s)$
and the first variation formula we know that $\s$ is a geodesic
satisfying that $\s'(0),\s'(1)\perp \p \bar V$. Hence, by using Lemma 2.1 we
obtain a homotopy $\ta$ of $\s$ with length $L(\ta)<L(\s )=a$. A contradiction.
This completes the proof.
\end{proof}

\noindent {\bf Remark 2.3:} By the proofs of Lemmas 2.1 and 2.2 we
know that, if $H\subset M^m$ is closed orientable hypersurface with
orientable normal bundle where $M^m$ possesses positive sectional
curvature, then $H$ separates $M^m$, provided either of the
following conditions holds:

(2.3.1) $\nu_H\ge m/2$;

(2.3.2) $H$ has nonpositive extrinsic curvature and $m\ge 4$.

\section{\bf Proofs of Theorems 1.2 and 1.4}

In the proof we need the following two results from Theorems 0.7,
0.8 and Theorem C in [FMR].

\begin{theorem}[FMR] Let $M$ be an $m$-dimensional closed Riemannian manifold of positive $k$-Ricci
curvature, and $N,H$ closed embedded submanifolds of $M$ with asymptotic indices $\nu _N$, $\nu _H$ respectively.
If $N$ and $H$ intersects transversely, then the following natural homomorphisms $$
i_1:\pi_i(N,N\cap H)\mapsto \pi_i(M,H), \ \ \ \ \ \ \
 i_2:\pi_i(H,N\cap H)\mapsto \pi_i(M,N)$$
 are isomorphisms for $i \le \nu_N+\nu_H-m-k+1$ and are
surjections for $i = \nu_N+\nu_H-m-k+2$.
\end{theorem}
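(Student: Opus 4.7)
The natural strategy is to apply Morse theory to the energy functional $E$ on the path space $P=P(M;N,H)$ of piecewise smooth paths $\gamma:[0,1]\to M$ with $\gamma(0)\in N$ and $\gamma(1)\in H$. Standard path-space arguments (identifying $P$ with the homotopy pullback of $N\hookrightarrow M\hookleftarrow H$) show that the connectivity of the inclusion of constant paths $N\cap H\hookrightarrow P$ is, up to a shift in degree, equivalent to the desired connectivity of the homomorphisms $i_1$ and $i_2$; so it suffices to show that $N\cap H\hookrightarrow P$ is $(\nu_N+\nu_H-m-k+1)$-connected.

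The critical points of $E$ on $P$ are precisely the geodesics meeting $N$ orthogonally at one end and $H$ orthogonally at the other, with the constant paths at points of $N\cap H$ forming the non-degenerate critical submanifold at energy zero. By the standard minimax machinery on Hilbert manifolds, to conclude that all higher cells are attached in dimension at least $\nu_N+\nu_H-m-k+2$ it is enough to bound the Morse index of every nontrivial critical point from below by this number.

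To produce such a bound at a nontrivial critical geodesic $\gamma$ of length $L$, I would mimic the construction in Lemma 2.1: pick $\nu_N$- and $\nu_H$-dimensional subspaces $W\subset T_{\gamma(0)}N$ and $\widetilde W\subset T_{\gamma(1)}H$ on which the respective second fundamental forms vanish. Parallel transport $P_\gamma$ along $\gamma$ sends $W$ into $\{\gamma'(L)\}^\perp$, and since $\widetilde W\subset\{\gamma'(L)\}^\perp$ as well, one obtains
$$\dim(P_\gamma(W)\cap\widetilde W)\ge\nu_N+\nu_H-m+1,$$
call this number $r$. On the corresponding $r$-dimensional space of parallel fields $V$ along $\gamma$ with $V(0)\in W$ and $V(L)\in\widetilde W$, both boundary terms in the second variation vanish, so
$$I(V,V)=-\int_0^L\langle R(V,\gamma')\gamma',V\rangle\,dt.$$
Taking an orthonormal parallel basis $e_1,\dots,e_r$ of this space (parallel transport preserves orthonormality and orthogonality to $\gamma'$), positivity of the $k$-Ricci curvature forces the sum of any $k$ of the integrands $\langle R(e_i,\gamma')\gamma',e_i\rangle$ to be strictly positive at every $t$, and a standard linear-algebraic argument then yields a subspace of dimension at least $r-(k-1)=\nu_N+\nu_H-m-k+2$ on which $I$ is negative definite, giving the required index bound.

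The main technical obstacle is the rigorous Morse-theoretic setup on the infinite-dimensional Hilbert manifold $P$ --- verifying the Palais--Smale condition, handling the non-isolated critical submanifold of constant paths, and translating the connectivity of the pair $(P,N\cap H)$ into the precise statements for $i_1$ and $i_2$; the curvature-and-index estimate itself is a direct extension of the ideas already used in Lemma 2.1 and is where the hypothesis on $\nu_N+\nu_H$ enters.
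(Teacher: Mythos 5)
This theorem is not proved in the paper: it is imported verbatim from [FMR] (Theorems 0.7, 0.8 and Theorem C there), so there is no in-paper proof to compare against. Your outline is, however, exactly the argument used in [FMR]: Morse theory for the energy functional on the path space $P(M;N,H)$, reduction of the connectivity of $i_1$, $i_2$ to the connectivity of $N\cap H\hookrightarrow P$, and a lower bound on the index of each nontrivial critical geodesic obtained from parallel fields joining the asymptotic nullity subspaces $W$ and $\widetilde W$ -- the same computation that appears in Lemma~2.1 of this paper. One small sharpening: for the final linear-algebra step you need that the trace of the index form is negative on \emph{every} $k$-dimensional subspace of the $r$-dimensional space $E$ of parallel fields (so that $I$ admits no nonnegative subspace of dimension $k$, whence the index is at least $r-k+1$); this follows from $k$-Ricci positivity applied to an arbitrary orthonormal $k$-frame of parallel fields in $E$, not merely to $k$-element subsets of your fixed basis $e_1,\dots,e_r$, since a sum such as $\sum_{i\in S}\langle R(e_i,\gamma')\gamma',e_i\rangle>0$ for subsets of one basis does not by itself control the form on all $k$-planes. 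With that adjustment the proposal is a correct sketch of the cited result.
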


\begin{theorem}[FMR] Let $M$ be an $m$-dimensional closed
Riemannian manifold of positive $k$-Ricci curvature, and let $N$ be
a closed embedded submanifold. Then the inclusion $i : N\mapsto M$
is $(2\nu_N-m-k+2)$-connected.
\end{theorem}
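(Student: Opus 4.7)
Plan: The result is a Wilking--Frankel type connectedness principle, and the approach is Morse theory on a path space combined with a dimension-counting index estimate that uses both the hypothesis on $\nu_N$ and the positive $k$-Ricci curvature.

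\textbf{Setup.} Set $q = 2\nu_N - m - k + 2$. Consider the path space $\Omega_{N,N} M$ of piecewise smooth curves $\g\colon [0,1] \to M$ with $\g(0), \g(1) \in N$, together with the endpoint fibration $\mathrm{ev}_0\colon \Omega_{N,N} M \to N$ whose fiber over $p\in N$ is $\Omega_{p, N} M$ and which is split by the section $s$ sending $p$ to the constant path at $p$. Since $\Omega_{p, N} M$ is the homotopy fiber of $i$, one has $\pi_j(\Omega_{p, N} M)\cong \pi_{j+1}(M, N)$, so showing that $i\colon N\hookrightarrow M$ is $q$-connected reduces, via the split long exact sequence of the fibration, to showing that $s$ is $(q-1)$-connected. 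By Morse theory on the energy functional, whose minimum set is exactly $s(N)$ and whose remaining critical points are non-constant geodesics meeting $N$ orthogonally at both endpoints, this will follow from the index bound $\mathrm{ind}(\g) \ge q$ for every such non-constant critical geodesic.

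\textbf{Index estimate.} For $\g$ as above, choose $\nu_N$-dimensional subspaces $W_0\subset T_{\g(0)} N$ and $W_1\subset T_{\g(1)} N$ on which $\al_N$ vanishes (they exist by the definition of $\nu_N$). Orthogonality of $\g$ with $N$ at the endpoints gives $W_0\subset \g'(0)^\perp$ and $W_1\subset \g'(1)^\perp$; since parallel transport $P$ along $\g$ maps the former $(m-1)$-dimensional space isometrically onto the latter,
\[
\dim\bigl(P(W_0)\cap W_1\bigr)\ge 2\nu_N - (m-1) = \ell,
\]
where $\ell := 2\nu_N - m + 1$. Choose parallel orthonormal fields $E_1,\dots,E_\ell$ along $\g$ with $E_i(0)\in W_0$ and $E_i(1)\in W_1$; each is an admissible variation for $\Omega_{N,N} M$, and the vanishing of $\al_N$ on $W_0, W_1$ kills both boundary terms in the second variation formula, leaving
\[
I(E_i,E_j)=-\int_0^1\langle R(E_i,\g')\g', E_j\rangle\,dt.
\]
For any $k$-dimensional subspace $W\subset\mathrm{span}\{E_i\}$ with orthonormal basis $f_1,\dots,f_k$, each $f_i$ is again a parallel field perpendicular to $\g'$ with endpoints in $W_0, W_1$, so the positive $k$-Ricci hypothesis gives
\[
\sum_{i=1}^{k} I(f_i, f_i) = -\int_0^1 \sum_{i=1}^{k} K(\g', f_i)\, dt < 0.
\]
Thus $I$ is not positive semi-definite on any $k$-dimensional subspace of $\mathrm{span}\{E_i\}$, and a standard linear-algebra argument then forces the negative index of $I|_{\mathrm{span}\{E_i\}}$ to be at least $\ell-(k-1)=q$. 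Hence $\mathrm{ind}(\g)\ge q$, as required.

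\textbf{Main obstacle.} The heart of the argument is the index estimate, where the dimension count via $P(W_0)\cap W_1$, the vanishing of the boundary terms due to $\al_N|_{W_0, W_1}\equiv 0$, and the $k$-Ricci averaging must combine in exactly the right way to produce the bound $2\nu_N - m - k + 2$. A further technical subtlety lies in the Morse theory over the positive-dimensional minimum set $s(N)$, which requires Morse--Bott methods (or a careful cell-attachment argument) to translate the pointwise index bound into the claimed connectedness of $s$ and therefore of $i$.
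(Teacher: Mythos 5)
The paper states this theorem without proof, quoting it from [FMR], and your reconstruction is essentially the argument given there (Wilking's deformation-lemma proof adapted to the asymptotic index and $k$-Ricci setting): Morse theory on the path space with both endpoints on $N$, reduction via the endpoint fibration and its section of constant paths, and the index estimate obtained by intersecting $P(W_0)$ with $W_1$ inside $\gamma'(1)^\perp$ and applying the $k$-Ricci trace argument to force index at least $\ell-k+1=2\nu_N-m-k+2$. The proof is correct as written; the only caveats are that the statement is vacuous unless $2\nu_N-m-k+2\ge 1$ (which is exactly what guarantees $\ell\ge k$ so that the orthonormal $k$-frames exist), and the Morse--Bott treatment of the nondegenerate critical submanifold of constant paths, which you appropriately flag as the remaining technical point.
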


\begin{lemma}
Let $M$ be an $m$-dimensional closed Riemannian manifold of positive
$k$-Ricci curvature. Let $H\subset M$ be a codimension $2$
submanifold with asymptotic index $\nu _H$, and let $N\subset M$ be
a  submanifold of dimension $n$ with asymptotic index $\nu _N$,
which in general position with $H$ (i.e., intersects transversely) ,
then the homomorphism
$$j_*:\pi_1(N-(N\cap H))\to \pi_1(M-H)$$
induced by the inclusion is surjective, provided $\nu_N, \nu_H\ge \fr{m+k-1}{2}$ and
$\nu_N+\nu_H\ge m+k$.
\end{lemma}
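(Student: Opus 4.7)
The plan is to reduce the problem to a tubular-neighborhood argument and then to combine Lemma 2.2 with a fibration comparison, using Theorems 3.1 and 3.2 to supply the missing $\pi_1$-surjectivity. Let $V$ be a small open tubular neighborhood of $H$ in $M$. Since $H$ has codimension $2$, the boundary sphere bundle $\partial\bar V\to H$ has fiber $S^1$; by transversality, $N\cap V$ is a tubular neighborhood of $N\cap H$ in $N$, and $N\cap\partial\bar V\to N\cap H$ is the restriction of $\partial\bar V\to H$ to $N\cap H$. Since $M-H$ and $N-(N\cap H)$ deformation retract onto $M-V$ and $N-V$ respectively, it is enough to prove that $\pi_1(N-V,p)\to \pi_1(M-V,p)$ is surjective for a basepoint $p\in N\cap\partial\bar V$.

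The first key input is Lemma 2.2 applied to $H\subset M$, which is legitimate since $\nu_H\ge(m+k-1)/2$; it yields $\pi_1(M-V,\partial\bar V)=0$, and in particular that $\pi_1(\partial\bar V,p)\to\pi_1(M-V,p)$ is surjective. The second ingredient is a comparison of the long exact sequences of the two $S^1$-bundles $\partial\bar V\to H$ and $N\cap\partial\bar V\to N\cap H$: since the morphism of bundles is the identity on fibers and $\pi_0(S^1)=0$, a standard diagram chase reduces the surjectivity of $\pi_1(N\cap\partial\bar V,p)\to\pi_1(\partial\bar V,p)$ to the surjectivity of $\pi_1(N\cap H)\to \pi_1(H)$. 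For the latter I invoke Theorems 3.1 and 3.2: the hypothesis $\nu_N+\nu_H\ge m+k$ lets me apply Theorem 3.1 at level $i=1$ to get an isomorphism $\pi_1(H,N\cap H)\cong\pi_1(M,N)$, while $\nu_N\ge(m+k-1)/2$ gives $2\nu_N-m-k+2\ge 1$, so Theorem 3.2 makes $N\hookrightarrow M$ a $1$-equivalence, whence $\pi_1(M,N)=0$. Thus $\pi_1(H,N\cap H)=0$, giving $\pi_1(N\cap H)\to\pi_1(H)$ surjective. Chaining the three surjections and using $N\cap\partial\bar V\subset N-V$ proves the lemma.

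The main technical obstacle I anticipate is the bookkeeping of connected components and basepoints, since Lemma 2.2, the fibration comparison, and the identification $\pi_1(H,N\cap H)=0$ all rely implicitly on compatibility of $\pi_0$'s. This, however, poses no new geometric difficulty: Theorems 3.1 and 3.2 applied at level $i=0$ under the same hypotheses yield bijections $\pi_0(N\cap H)\to\pi_0(H)$ and $\pi_0(N)\to\pi_0(M)$, so the whole argument can be organized component by component if needed.
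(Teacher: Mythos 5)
Your proof is correct and follows essentially the same route as the paper's: Lemma 2.2 supplies the surjection $\pi_1(\partial \bar V)\to\pi_1(M-V)$, the comparison of the two circle bundles $\partial\bar V\to H$ and $N\cap\partial\bar V\to N\cap H$ reduces everything to $\pi_1(H,N\cap H)=0$, and that vanishing is obtained from Theorems 3.1 and 3.2 exactly as you describe. Your added care about $\pi_0$ and basepoints is a refinement of, not a departure from, the paper's argument.
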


\begin{proof}  By Theorem 3.2 we know that $\pi_1(M,N)=0$. By Theorem
3.1 we get further that  $\pi_1(H,N\cap H)=0$.

 Let $V$ be the open
$\varepsilon$-tubular neighborhood of $H$ defined in Lemma 2.1.
Since $\nu_H\ge \fr{m+k-1}{2}$, by Lemma 2.2 it follows that
$$\pi _1(\partial \bar V)\to \pi _1(M-V)$$ is surjective. Observe
that $N\cap \p \bar V$ is contained in $N-(N\cap H)$, since $H$ and
$N$ are in general position. In fact,  $N\cap \p \bar V$ is
diffeomorphic to the normal sphere bundle of $N\cap H$ in $N$.
Therefore, it suffices to show that the inclusion induces an
epimorphism, $\pi_1(N\cap \p \bar V)\mapsto \pi_1(\p \bar V)$ ,
because the inclusion factors through $N-(N\cap H)\to M-V$.

Note that $\partial \bar V$ is an $S^1$-bundle over $H$, whose
pullback on $N\cap H$ is isomorphic to the normal circle bundle of
$N\cap H$ in $N$, by the transversality of $N$ and $H$. Since
$\pi_1(H, N\cap H)=0$, by comparing the exact sequences for the
circle bundles $S^1\to
\partial \bar V\cap N\to N\cap H$ (resp. $S^1\to \partial \bar V\to H$)
we know that   $\pi _1(N\cap \partial \bar V)\to \pi _1(\partial \bar V)$
is surjective. The desired result follows.
\end{proof}

\begin{proof} [Proof of Theorem 1.2] By Florit's Theorem in
\cite{Fl}, $\nu_K\ge  n-4$ if $K$  has nonpositive extrinsic
curvature. Thus, $K\to S^n$ is $(n-7)$-connected, by Theorem 3.2.
This implies that the homology group $H_i(K;\Bbb Z)=0$ for all
$0<i\leq n-8$. By Poincar\'e duality we know that $H_i(K;\Bbb Z)=0$
for all $0<i<n-2$, provided $n-8\geq \frac 12(n-2)$, i.e., $n\ge
14$. As a consequence $K$ is a homotopy sphere, and so homeomorphic
to $S^{n-2}$ by Smale's theorem. On the other hand, by Lemma 2.2 we
know that $\pi _1(\p \bar V)\to \pi _1(S^n-K)$ is surjective. Note
that $\p \bar V$ is a circle bundle over $K$, and so $\pi _1(\p \bar
V )\cong \Bbb Z$. This shows that $\pi _1(S^n-K)$ is abelian, and so
$\pi _1(S^n-K)\cong H_1(S^n-K) \cong H^{n-1}(S^n, K)\cong \Bbb Z$.

If $K$ is totally geodesic, we may use Wilking's theorem (cf. \cite{Wi}) instead in the
above argument, to show that $K$ is homeomorphic to $S^{n-2}$ if $n\ge 5$. The rest of the proof
is the same as above. This proves the theorem.
\end{proof}

\begin{proof}[Proof of Theorem 1.4]
We apply Lemma 3.3 with $k=1$. By Florit's theorem (\cite{Fl}) the
asymptotic index $\nu_H\ge m-4$, and $\nu_N\ge 2n-m$. It is easy to
see that (1.4.1) implies the conditions of Lemma 3.3.

If $N, H$ are both totally geodesic we have $\nu _N=\text{dim} (N)$
(resp. $\nu _H=\text{dim} (H)$). Thus the assumption (1.4.2) implies
the desired result by using Lemma 3.3.
\end{proof}


\begin{thebibliography}{10}

\bibitem[BLP]{BLP}  M. Boileau; B. Leeb; J. Porti,
 {\it Geometrization of $3$-dimensional orbifolds},
Ann. Math.  162(2005),  195-290.
\bibitem[Bo] {Bo} Borisenko, A. A., {\it Certain classes of
multidimensional surfaces}, J. Soviet. Math. 51, no. 2 (1990),
2191--2197; transl. from Ukrainskii Geom. Sb. 29 (1986), 5--12.
\bibitem[BR] {BR} Borisenko, A., Rovenski, V.,
{\it About topology of saddle submanifolds}, Diff. Geom. and its
Applications, 25 (2007), 220--233.
\bibitem[BRT] {BRT} A. Borisenko, M. L. Rabelo, K.
Tenenblat, On saddle submanifolds of Riemannian manifolds, Geom.
Dedicata, 67 (1997), 233--243.
\bibitem[Fl]{Fl} Florit, L., {\it On submanifolds with nonpositive
extrinsic curvature}, Math. Ann. 298 (1994), 187--192.
\bibitem[Fr1]{Fr1} Frankel, T., {\it Manifolds of positive curvature},
Pacific J. Math., 11 (1961), 165--174.
\bibitem[Fr2]{Fr2} Frankel, T., {\it On the fundamental group of a
compact minimal submanifold}, Ann. Math., 83 (1966), 68--73.
\bibitem[FL] {FL} Fulton, W., Lazarsfeld R., {\it Connectivity and
its applications in algebraic geometry}, Lect. Notes in Math., 862,
Springer-Verlag, 26--92.
\bibitem[FM] {FM} Fang, F., Mendon\c ca, S., {\it Complex
immersions in K\"ahler manifolds of positive holomorphic k-Ricci
curvature}, Trans. Amer. Math. Soc. 357 (2005), no. 9, 3725--3738.
\bibitem[FMR]{FMR} Fang, F., Mendon\c ca, S., Rong, X., {\it A
connectedness principle in the geometry of positive curvature},
Comm. Anal. Geom., 13 (2005), no. 4, 671--695.
\bibitem[Go]{Go} Gordon, C. McA., {\it  On the higher-dimensional Smith
conjecture},  Proc. London Math. Soc. 29 (1974), 98--110.
\bibitem[Lf] {Lf} Lefschetz, S. {\it L'analysis situs et la
g\'eom\'etrie alg\'ebrique}, Gauthier-Villars, Paris (1924).
\bibitem[Lv] {Le} Levine, J., {\it
An algebraic classification of some knots of codimension two},
Comment. Math. Helv. 45 (1970), 185--198.
\bibitem[KX] {KX} Kenmotsu K., Xia C.,
{\it Hadamard-Frankel type theorems for manifolds with partially
positive curvature}, Pacific J. Math., 176 (1996), no. 4, 129--139.
\bibitem[MB] {MB} {\it The Smith conjecture},
Papers presented at the symposium held at Columbia University, New
York, 1979. Edited by John W. Morgan and Hyman Bass, Pure and
Applied Mathematics, Academic Press, 1984.
\bibitem[Re]{Re} Reznikov, A., {\it Knotted totally geodesic submanifolds in positively curved
spheres,} Mat. Fiz. Anal. Geom. 7 (2000), 458--463.
\bibitem[Ro]{Ro} Rovenski, V., {\it On the role of partial Ricci
curvature in the geometry of submanifolds and foliations}, Ann.
Polonici Math. 28 no. 1 (1998), 61--82.
\bibitem[Wi]{Wi} Wilking, B., {\it Torus
actions on manifolds of positive sectional curvature}, Acta Math.
191 (2003), 259--297.
\bibitem[Wu]{Wu} Wu, H., {\it Manifolds of partially
positive curvature}, Indiana Univ. Math. J. 36 (1987), 525--548.




\end{thebibliography}
\end{document}